\title{Projections of the natural measure for percolation fractals}
\date{}
\author{Yuval Peres\\ \normalsize Microsoft Research, Redmond \and Micha\l \ Rams \thanks{partially supported by the MNiSW grant N201 607640 (Poland)}\\
\normalsize Institute of Mathematics, Polish Academy of Sciences}
\theoremstyle{plain}
\newtheorem{lem}{Lemma}[section]
\newtheorem{prop}[lem]{Proposition}
\newtheorem{thm}[lem]{Theorem}
\newtheorem{cor}[lem]{Corollary}
\theoremstyle{definition}
\theoremstyle{remark}
\newtheorem*{rem}{Remark}
\numberwithin{equation}{section}
\renewcommand{\epsilon}{\varepsilon}
\newcommand{\ien}{\underline{i}_n}
\newcommand{\jen}{\underline{j}_n}
\begin{document}

\maketitle

\def\thefootnote{}
\footnote{2010 {\it Mathematics
Subject Classification}: 28A78, 28A80}
\def\thefootnote{\arabic{footnote}}
\setcounter{footnote}{0}

\begin{abstract}
We prove that, with probability 1, all orthogonal projections of the
natural measure on a percolation fractal are absolutely continuous
and (except for  the horizontal and vertical projection) have
H\"older continuous density.
\end{abstract}

\section{Introduction}

The objects of study in the present paper are percolation fractals
in the plane and their properties under projections. Percolation
fractals are an important class of random fractals, introduced by
Mandelbrot in \cite{M}; we refer the reader to \cite{C} or
\cite{G} for their properties and further references. Of special
importance for us will be the natural measure on a percolation
fractal, defined by Mauldin and Williams in \cite{MW}. It is a
random probability measure, almost surely
supported on the percolation fractal.

Our goal is to study the projection properties of the natural 
measure, in the sense of Marstrand Theorem. It is a continuation
of the work in \cite{RS}, where the projection properties of the
percolation fractal itself were studied. The main result of
\cite{RS} was that if the expected Hausdorff dimension of the
percolation fractal was greater than 1 then for almost all
realizations, all linear projections of the fractal contained an
interval.

Our main result is that, under the same assumptions, for almost
all realizations, all linear projections of the natural measure
are absolutely continuous and almost all (except the horizontal
and vertical projections) have H\"older continuous density. The density of the projection in horizontal or vertical direction is almost surely H\"older continuous in the $k$-symbolic metric but is in general discontinuous at all the $k$-adic points. While
this result implies the result from \cite{RS}, it should be
mentioned that the approach in \cite{RS} was robust and applicable
to certain modified random fractals, while the approach in the
present paper works for percolation fractals only.

\section{Notation and results}

Let us begin by recalling the construction of percolation fractal.
There will be two parameters: an integer $k\geq 2$ and a real
number $p\in (0,1)$. Let
$K=[0,1]^2=\bigcup _{i,j=0}^{k-1}
K_{i,j}$, where
$K_{i,j}:=\left[\frac{i}{k},\frac{i+1}{k}\right]\times
\left[\frac{j}{k},\frac{j+1}{k}\right]$. In general, for
$\ien:=(i_1,\dots ,i_n)\in \{0,\ldots,k-1\}^n$ and
$\jen:=(j_1,\dots ,j_n)\in \{0,\ldots,k-1\}^n$ we write

\[
I_{\ien}:=\left[\sum_{\ell=1}^{n}i_\ell\cdot
k^{-\ell},k^{-n}+\sum_{\ell=1}^{n}i_\ell\cdot k^{-\ell}\right]
\]
and

\[
K_{\ien,\jen} = I_{\ien} \times I_{\jen}.
\]

Let $\Sigma_n=\{0,\ldots,k-1\}^n\times\{0,\ldots,k-1\}^n$. We
construct a family of random sets $\mathcal{E}_n(\omega)\subset
\Sigma_n$ in the following way. We begin with

\[
P(( i, j) \in \mathcal{E}_1) = p,
\]
(with all these events mutually independent)
and then we continue inductively:
\[
P((\ien i, \jen j) \in \mathcal{E}_{n+1}|
(\ien,\jen)\in \mathcal{E}_n) = p,
\]
(with these events mutually independent) and $(\ien i,
\jen j) \notin \mathcal{E}_{n+1}$ if $(\ien,\jen)\notin
\mathcal{E}_n$. We write

\[
E_n=\bigcup_{(\ien,\jen)\in \mathcal{E}_n} K_{\ien,\jen}
\]
and

\[
E=\lim_{n\to\infty} E_n =\bigcap_n E_n.
\]

Thus, $E$ is a random fractal set (which we will call the {\it
realization} of a percolation fractal).

Note that ${\cal E}_n$ is the $n$-th step in a branching process which
has an average number of $k^2p$ children for a parent. Hence, as
long as $p>1/k^2$, there is a positive
probability that $E$ is nonempty.

The limit

\[
Z(E)=\lim_{n\to\infty} (k^2 p)^{-n}\cdot \sharp \mathcal{E}_n
\]
exists and is finite almost surely by \cite{AN}, and it is almost
surely positive if $E$ is nonempty. Moreover, by \cite{F} and
\cite{MW}, for almost all (nonempty) realizations of $E$

\[
\dim_H E = \frac {\log {k^2 p}} {\log k}.
\]
In what follows, we will always assume that $kp>1$, which implies that the
Hausdorff dimension of the percolation fractal (when nonempty) is greater than 1 almost surely.

The {\it natural measure} is the weak limit

\[
\mu=\lim_{n\to\infty} \frac 1 {p^n Z(E)} \rm{Leb}|_{E_n},
\]
it exists almost surely, see \cite{MW}.

For $\theta\in [0,\pi)$, we will consider the  projections $\pi_\theta$ in direction $\theta$ defined  by $\pi_\theta(x,y)=x\cos \theta+ y \sin \theta$. The image of the measure $\mu$ under the map $\pi$ will be denoted $\pi^* \mu$. Our main result is the following:

\begin{thm} \label{thm:main}
Assume $k p>1$. If $E$ is nonempty then almost surely all the
projections $\mu^\theta=\pi_\theta^* \mu$ are absolutely continuous
with respect to the Lebesgue measure. Moreover, almost surely the
density of $\mu^\theta$ is H\"older continuous for $\theta
\neq 0,\pi/2$. For the horizontal and vertical projections the density of the projected measure will in general be undefined at the $k$-adic points, but it will almost surely be H\"older continuous in the metric
\[
\rho(x,y) = \exp(-\log k \cdot \min \{\ell : \exists m\, x<mk^{-\ell}<y\})
\]
everywhere except at the $k$-adic points.
\end{thm}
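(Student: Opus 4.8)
The plan is to control the projected measure $\mu^\theta$ by estimating, for each dyadic (more precisely, $k$-adic) scale $n$, how much mass the projection places on intervals of length comparable to $k^{-n}$. Write $\mu^\theta = \pi_\theta^*\mu$ and recall that $\mu$ is the almost sure weak limit of the normalized Lebesgue measure on $E_n$. The natural strategy is a second-moment (variance) argument on the branching structure: for a fixed interval $J$ of length $k^{-n}$ in the projected line, I would compute $\mathbb{E}[\mu^\theta(J)]$ and $\mathbb{E}[\mu^\theta(J)^2]$, where the expectation is over the percolation randomness, and show that the ratio $\mathbb{E}[\mu^\theta(J)^2]/(\mathbb{E}[\mu^\theta(J)])^2$ stays bounded as $n\to\infty$. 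The key input is the assumption $kp>1$: the expected number of surviving cells whose projections land in a given length-$k^{-n}$ interval grows like $(kp)^n$, and this exponential growth is exactly what lets the variance be dominated by the square of the mean.

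First I would set up the combinatorial geometry: for a fixed direction $\theta\neq 0,\pi/2$, the projection $\pi_\theta(K_{\ien,\jen})$ of a level-$n$ cell is an interval of length $\sim k^{-n}(|\cos\theta|+|\sin\theta|)$, and I must count how many level-$n$ cells project into a fixed interval $J$. Because the map $(\underline i,\underline j)\mapsto \pi_\theta(\text{center})$ spreads the $k^{2n}$ cells across an interval of length $O(1)$, a typical target interval of length $k^{-n}$ receives on the order of $k^{2n}\cdot k^{-n}=k^n$ cells deterministically in the full grid; after percolation, the expected number surviving is $\sim (k^2p)^n k^{-n}=(kp)^n\to\infty$. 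I would then introduce the random variable $X_n(J)=\sum_{(\ien,\jen)\in\mathcal E_n,\ \pi_\theta(K_{\ien,\jen})\cap J\neq\emptyset}1$ and relate $\mu^\theta(J)$ to $k^{-n}Z(E)^{-1}X_n(J)$ up to bounded multiplicative error. The Hölder continuity of the density then follows from a uniform (over all $J$ at all scales $n$, simultaneously) bound of the form $\mu^\theta(J)\le C\,|J|^{1-\delta}$ for some small $\delta$ depending on how close $\theta$ is to the axes; a Borel--Cantelli / union-bound argument over the countably many $k$-adic intervals upgrades the single-interval estimate to an almost sure statement holding for all intervals.

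The main obstacle, and the place where the two regimes genuinely differ, is the covariance term in the variance computation. For the variance of $X_n(J)$ I must handle pairs of cells $(\ien,\jen)$ and $(\ien',\jen')$ sharing a common ancestor at some level $m<n$: such pairs are positively correlated, and the contribution of level-$m$ coincidences scales like $(kp)^{-m}$ relative to the squared mean. The sum $\sum_{m=0}^n (kp)^{-m}$ converges precisely because $kp>1$, which is what makes the second moment method succeed and is the hypothesis of the theorem. The delicate part is that when $\theta$ approaches $0$ or $\pi/2$ the constant $(|\cos\theta|+|\sin\theta|)^{-1}$ degenerates and the number of cells projecting into $J$ becomes sensitive to the alignment of cell boundaries with $J$; this is why the axis directions require the weaker conclusion. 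I would treat the horizontal/vertical case separately, replacing Euclidean intervals by $k$-adic cylinders and measuring continuity in the symbolic metric $\rho$: in that metric two points are close exactly when they share a long $k$-adic prefix, so the one-dimensional projection $\pi_0^*\mu$ becomes (up to normalization) a self-similar-type measure on the symbolic space whose $\rho$-Hölder regularity follows from the same branching-process variance estimate, while genuine discontinuity at $k$-adic points arises because distinct cylinders abutting a $k$-adic point carry independent, generically unequal masses.
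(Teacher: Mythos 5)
There is a genuine gap, in two related places. First, your final estimate is too weak for the theorem: a Frostman-type bound $\mu^\theta(J)\le C|J|^{1-\delta}$ holding uniformly over all intervals does not imply absolute continuity (singular measures of dimension $1-\delta/2$ satisfy it), let alone a H\"older continuous density. To prove the stated theorem you must show that the density itself exists pointwise and control its oscillation, i.e.\ you need two-sided estimates of the form $|\mu^\theta(J)/|J| - \mu^\theta(J')/|J'|| \lesssim b^n$ for nearby intervals at scale $n$, not just an upper mass bound. The paper does exactly this: it works with the density $y_n^\theta(x)$ of the level-$n$ approximation, exploits the martingale property $E(y_{n+1}^\theta(x)\mid\mathcal E_n)=y_n^\theta(x)$, and shows the increments $|y_{n+1}^\theta(x)-y_n^\theta(x)|$ are exponentially small, so $y_n^\theta(x)$ is a Cauchy sequence with geometric rate, which yields both existence and H\"older regularity of the limit density.

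Second, and decisively, your second-moment method cannot survive the union bounds you invoke. Boundedness of $\mathbb E[X_n(J)^2]/(\mathbb E X_n(J))^2$ gives, via Chebyshev, failure probabilities of constant order (or at best polynomially small in $(kp)^{-n}$), whereas at scale $n$ you must beat an entropy of $\sim k^n$ intervals \emph{and} an exponentially fine net of directions $\theta$ (the set of directions is uncountable, so Borel--Cantelli over ``countably many $k$-adic intervals'' does not address simultaneity in $\theta$ at all; one needs a Lipschitz estimate for $(x,\theta)\mapsto y_n^\theta(x)$, with Lipschitz constant growing like $p^{-n}k^n$, plus a $p^{5n/6}k^{-7n/6}$-dense net, as in the paper's Section 5). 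The paper gets through the union bound only because Hoeffding's inequality, applied to the conditionally independent contributions $Y(\ien,\jen;x,\theta)$ of the surviving level-$n$ cells, gives \emph{superexponential} failure probabilities of order $\gamma^{(pk)^{n/3}}$ --- doubly exponential in $n$ --- which dominate the exponentially many events in the net. Your variance computation (the covariance decomposition over common ancestors, convergent since $kp>1$) is the standard route to $L^2$-boundedness of the martingale $y_n^\theta(x)$ for a \emph{fixed} $\theta$, which would give absolute continuity with $L^2$ density almost surely for each fixed direction (hence for a.e.\ direction by Fubini), but that is a strictly weaker statement than the theorem's ``almost surely, for all $\theta$ simultaneously, with H\"older density.'' Your symbolic treatment of the axis directions (locally constant densities on $k$-adic cylinders, independent masses abutting $k$-adic points) matches the paper's picture, but it inherits the same quantitative deficiency: without a concentration inequality at the Hoeffding scale, the scheme does not close.
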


\begin{rem}
The H\"older exponent can be chosen independent of $\theta$, the H\"older constant blows up as $\theta$ approaches horizontal or vertical direction. We thank Pablo Shmerkin for this observation.
\end{rem}


In the next section we present the method we use to estimate the density of the projected measure. A similar approach was first applied in \cite{FG}. In the fourth section we consider horizontal and vertical projections. Finally, in the last section we present the proof for non-horizontal and non-vertical directions.


\section{Large deviation estimates for the projected measure}

To study $\mu$ and $\mu^\theta$, we will use their level $n$ approximations. We will denote

\[
\mu_n= \frac 1 {p^n} \rm{Leb}|_{E_n}
\]
and

\[
\mu_n^\theta = \pi_\theta^* \mu_n.
\]

We will denote $\ell_x^\theta = \pi_\theta^{-1}(x)$.

Note that the weak limit $\tilde{\mu}=\lim \mu_n$ is $Z(E) \mu$, not $\mu$.
We choose this normalization because the sequence $\{\mu_n\}$ of random
measures has an important martingale property:

\begin{equation} \label{martingale}
E(\mu_{n+1}| \mathcal{E}_n) = \mu_n.
\end{equation}

The measure $\mu_n^\theta$ is obviously absolutely continuous with
respect to Lebesgue measure; we denote its density by
$y_n^\theta$. Analogous to \eqref{martingale}, we have

\begin{equation} \label{martingale2}
E(y_{n+1}^\theta(x)| \mathcal{E}_n) = y_n^\theta(x).
\end{equation}

Note the geometric interpretation:
\[
y_n^\theta(x) = p^{-n} \cdot |\ell_x^\theta \cap E_n|.
\]

Our goal in this section: assuming we know $y_n^\theta(x)$ for some fixed $\theta$ and $x$, we would like to estimate $y_{n+1}^\theta(x)$. For every $K_{\ien, \jen}\subset E_n$ that the line $\ell_x^\theta$ intersects, it can intersect between 0 and $2k-1$ of its children (each of $n+1$-level subsquares of $K_{\ien,\jen}$ intersecting this line is in $E_{n+1}$ with probability $p$, independently). Given $(\ien, \jen)\in {\cal E}_n$, let $Y(\ien, \jen; x,\theta)$ be the length of intersection of $\ell_x^\theta$ with the union of squares $K_{\ien i, \jen j}$ for those $i,j$ for which $(\ien i, \jen j)\in {\cal E}_{n+1}$.

Naturally, we have

\begin{equation} \label{eqn:y}
y_{n+1}^\theta(x) = p^{-n-1}\cdot \sum_{(\ien, \jen)\in{\cal E}_n} Y(\ien, \jen; x,\theta).
\end{equation}

The random variables $Y(\ien, \jen; x,\theta)$ are conditionally independent given $E_n$; they take values between $0$ and $\sqrt 2 k^{-n}$ and satisfy

\[
E(Y(\ien, \jen; x,\theta)|{\cal E}_n) = p\cdot|\ell_x^\theta\cap K_{\ien, \jen}|.
\]


Given $y_n^\theta(x)$, we will estimate $y_{n+1}^\theta(x)$. We are going to use the following result (a variation of Hoeffding inequality). We denote $||X_i|| = \sup_\omega |X_i|(\omega)$.

\begin{lem} \label{lem:ld}
Let $\{X_i\}$ be a family of independent bounded random variables
with $E(X_i)=0$ and $||X_i|| \leq 1$. Set $S=\sum X_i$ and $\Upsilon=\sum
||X_i||$. Then for any positive $a$ we have

\[
P(S>a) \le  e^{-a^2/2\Upsilon}.
\]

\begin{proof}
It is an immediate consequence of \cite[Theorem 2]{H}.
\end{proof}
\end{lem}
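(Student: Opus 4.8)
The plan is to run the standard Chernoff (exponential Markov) argument and reduce everything to a moment generating function estimate. For any $\lambda>0$ I would write
\[
P(S>a) = P(e^{\lambda S} > e^{\lambda a}) \le e^{-\lambda a}\, E(e^{\lambda S}) = e^{-\lambda a}\prod_i E(e^{\lambda X_i}),
\]
where the first inequality is Markov's inequality applied to the nonnegative variable $e^{\lambda S}$, and the factorization uses independence of the $X_i$. The whole problem now reduces to bounding each factor $E(e^{\lambda X_i})$ and then choosing $\lambda$ well.

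For the key step I would establish the Hoeffding-type moment bound $E(e^{\lambda X_i})\le e^{\lambda^2 ||X_i||^2/2}$. Writing $c=||X_i||$, so that $X_i$ takes values in $[-c,c]$, convexity of $t\mapsto e^{\lambda t}$ gives the pointwise interpolation bound
\[
e^{\lambda X_i} \le \frac{c-X_i}{2c}\,e^{-\lambda c} + \frac{c+X_i}{2c}\,e^{\lambda c}.
\]
Taking expectations and using $E(X_i)=0$ collapses the right-hand side to $\cosh(\lambda c)$. Comparing Taylor coefficients term by term, via $(2n)!\ge 2^n n!$, yields $\cosh(t)\le e^{t^2/2}$, and hence $E(e^{\lambda X_i})\le e^{\lambda^2 c^2/2}$.

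Substituting these bounds back and collecting exponents, I obtain
\[
P(S>a) \le \exp\!\left(-\lambda a + \frac{\lambda^2}{2}\sum_i ||X_i||^2\right).
\]
Here I would invoke the normalization hypothesis $||X_i||\le 1$, which gives $||X_i||^2\le ||X_i||$ and therefore $\sum_i ||X_i||^2 \le \sum_i ||X_i|| = \Upsilon$; this is exactly why the lemma is stated with $\Upsilon$ a sum of norms rather than of squared norms. The exponent is then at most $-\lambda a + \lambda^2\Upsilon/2$, a quadratic in $\lambda$ minimized at $\lambda = a/\Upsilon>0$, where its value is $-a^2/(2\Upsilon)$. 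This produces the claimed bound $P(S>a)\le e^{-a^2/2\Upsilon}$.

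The argument is routine, and I expect the only genuinely load-bearing points to be the two elementary steps in the moment generating function estimate: the convexity interpolation and the comparison $\cosh(t)\le e^{t^2/2}$. The hypothesis $||X_i||\le 1$ does real work exactly once, in passing from $\sum ||X_i||^2$ to $\Upsilon$; without it the natural bound would carry $\sum ||X_i||^2$ in the exponent, so this step is where the precise form of the stated inequality is locked in.
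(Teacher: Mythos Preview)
Your argument is correct and is exactly the standard proof of Hoeffding's inequality; the paper does not give an independent argument but simply cites \cite[Theorem~2]{H}, so you have effectively unpacked that citation. The one step you single out---using $\|X_i\|\le 1$ to pass from $\sum\|X_i\|^2$ to $\Upsilon$---is indeed the only place beyond Hoeffding's theorem where anything needs to be said.
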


Applied to our particular sum of random variables, Lemma \ref{lem:ld} implies the following:

\begin{lem} \label{lem:cre}
There exist $C_1>0$ and $\gamma<1$ such that the following statements
are true.
\begin{itemize}
\item[i)] If $x, \theta, {\cal E}_n$ satisfy $y_n^\theta(x) > 1$ then
\[
P(y_{n+1}^\theta(x) < y_n^\theta(x) + p^{-n} k^{-n} (p^n k^n
y_n^\theta(x))^{2/3}|{\cal E}_n) > 1 - C_1 \gamma^{(pk)^{n/3}}.
\]

\item[ii)] If $x, \theta, {\cal E}_n$ satisfy $y_n^\theta(x) < (pk)^{n/3}$ then
\[
P(|y_{n+1}^\theta(x) - y_n^\theta(x)| < (pk)^{-n/6}|{\cal E}_n) > 1 - C_1 \gamma^{(pk)^{n/3}}.
\]

\end{itemize}
\end{lem}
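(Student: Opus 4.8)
The plan is to write the increment $y_{n+1}^\theta(x)-y_n^\theta(x)$ as a rescaled sum of conditionally independent centered variables and to feed it into Lemma \ref{lem:ld}. Setting $L(\ien,\jen)=|\ell_x^\theta\cap K_{\ien,\jen}|$ and letting $X(\ien,\jen)=Y(\ien,\jen;x,\theta)-p\,L(\ien,\jen)$ be the conditionally centered version of $Y$, the identity $\sum_{(\ien,\jen)\in\mathcal{E}_n}L(\ien,\jen)=|\ell_x^\theta\cap E_n|=p^n y_n^\theta(x)$ combined with \eqref{eqn:y} gives
\[
y_{n+1}^\theta(x)-y_n^\theta(x)=p^{-n-1}\sum_{(\ien,\jen)\in\mathcal{E}_n}X(\ien,\jen)=:p^{-n-1}S,
\]
where the $X(\ien,\jen)$ are conditionally independent given $\mathcal{E}_n$ and have conditional mean $0$ by the martingale property \eqref{martingale2}. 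Thus both statements reduce to a tail estimate for $S$.

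The one genuinely important point, and the step I expect to matter most, is to bound each $X(\ien,\jen)$ not by the diagonal $\sqrt2\,k^{-n}$ but by $L(\ien,\jen)$ itself. The surviving children of $K_{\ien,\jen}$ lie inside $K_{\ien,\jen}$, so $0\le Y(\ien,\jen;x,\theta)\le L(\ien,\jen)$, whence $|X(\ien,\jen)|\le\max(p,1-p)\,L(\ien,\jen)\le\sqrt2\,k^{-n}$. Normalizing by $\sqrt2\,k^{-n}$ so that the rescaled variables $X_i'$ satisfy $\|X_i'\|\le1$, the quantity $\Upsilon$ of Lemma \ref{lem:ld} is then controlled directly by $y_n^\theta(x)$:
\[
\Upsilon=\sum\|X_i'\|\le\frac{1}{\sqrt2\,k^{-n}}\sum_{(\ien,\jen)\in\mathcal{E}_n}L(\ien,\jen)=\frac{(pk)^n\,y_n^\theta(x)}{\sqrt2}.
\]
Tying $\Upsilon$ to $y_n^\theta(x)$ rather than to the number of squares the line meets is exactly what makes the subsequent estimates scale correctly; bounding $\Upsilon$ only by the count of intersected squares would be too lossy to close the argument.

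With this bound in hand the two parts are a matter of arithmetic. For (i) the forbidden event is $S\ge a$ with $a=p\,k^{-n}\bigl((pk)^n y_n^\theta(x)\bigr)^{2/3}$; rescaling to $S'$ and inserting the bound on $\Upsilon$ yields a Hoeffding exponent $a'^2/(2\Upsilon)\ge c\,p^2(pk)^{n/3}\,(y_n^\theta(x))^{1/3}$, which is at least $c\,p^2(pk)^{n/3}$ because $y_n^\theta(x)>1$, and Lemma \ref{lem:ld} then gives the claim with $\gamma=e^{-c p^2}<1$. For (ii) I would apply the lemma to both $X_i'$ and $-X_i'$ with the threshold coming from $|y_{n+1}^\theta(x)-y_n^\theta(x)|<(pk)^{-n/6}$; here the hypothesis $y_n^\theta(x)<(pk)^{n/3}$ enters through $\Upsilon\le(pk)^{4n/3}/\sqrt2$, and the identical computation again produces an exponent proportional to $(pk)^{n/3}$, so the two decay rates agree. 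The exponents $2/3$ and $1/6$ in the statement are precisely those for which, under the respective hypotheses on $y_n^\theta(x)$, the Hoeffding exponent comes out proportional to $(pk)^{n/3}$; taking $C_1=2$ absorbs the factor from the two-sided estimate.
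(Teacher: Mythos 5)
Your proposal is correct and follows essentially the same route as the paper: you center the variables $Y(\ien,\jen;x,\theta)-p|\ell_x^\theta\cap K_{\ien,\jen}|$, normalize by $\sqrt2\,k^{-n}$ so that the Hoeffding quantity $\Upsilon$ is controlled by $(pk)^n y_n^\theta(x)$ rather than by the number of intersected squares, and choose the same thresholds $a$ (the paper's $a=\frac{1}{\sqrt2}p(p^nk^ny_n^\theta(x))^{2/3}$ for (i) and $a=\frac{1}{\sqrt2}p(pk)^{5n/6}$ for (ii)), arriving at the exponent $\frac{p^2}{2\sqrt2}(pk)^{n/3}$ in both cases. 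The only cosmetic difference is that you bound $\max(p,1-p)\le 1$ where the paper keeps this factor inside $\gamma$, which changes nothing of substance.
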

\begin{proof}
 We will apply Lemma \ref{lem:ld} to the random variables $X_{\ien, \jen}=k^n (Y(\ien, \jen; x,\theta)-p|\ell_x^\theta\cap K_{\ien, \jen}|)/\sqrt{2}$. We have

\[
||X_{\ien, \jen}|| =  \frac 1 {\sqrt{2}} \max(p, 1-p) k^n |\ell_x^\theta\cap K_{\ien, \jen}| \le 1,
\]
whence
\[
\Upsilon = \frac 1 {\sqrt{2}} \max(p, 1-p) p^n k^n y_n^\theta(x).
\]


To prove statement i), we choose
\[
a = \frac 1 {\sqrt 2} p (p^n k^n y_n^\theta(x))^{2/3}.
\]
Denoting 
\[
\gamma = \exp\left(- \frac 1 {2\sqrt{2}} \frac {p^2} {\max(p,1-p)}\right)<1,
\]
we obtain that
\[
P\Bigl(y_{n+1}^\theta(x) \ge y_n^\theta(x) + p^{-n} k^{-n} (p^n k^n y_n^\theta(x))^{2/3} | {\cal E}_n \Bigr) \le 
\gamma^ {(pk)^{n/3} (y_n^\theta(x))^{1/3}}.
\]

Substituting $y_n^\theta(x) > 1$ gives

\[
P(y_{n+1}^\theta(x) \ge  y_n^\theta(x) + p^{-n} k^{-n} (p^n k^n y_n^\theta(x))^{2/3} | {\cal E}_n) \le \gamma^{(pk)^{n/3}} \,.
\]

The statement ii) follows in an analogous way by choosing

\[
a = \frac 1 {\sqrt 2} p (pk)^{5n/6}.
\]
\end{proof}

\section{Horizontal and vertical projections}

By symmetry, we only need to consider the vertical projection, $\theta=\pi/2$. Observe that $y_n^\theta(x)$ is constant on the open $k$-adic intervals of level $n$. We will write $I_{\ien}$ for the $k$-adic interval of 
length $k^{-n}$ with address $\ien$, and $y_n(\ien)$ for the value of $y_n^{\pi/2}(x)$ when $x\in I_{\ien}$.

We fix $C_1$ for which Lemma \ref{lem:cre} holds.
It has two immediate corollaries  we will need. 

The first corollary guarantees exponential speed of convergence of
$y_n^\theta(x)$ if for some $n$ it is not too big.

\begin{cor} \label{cor:small}
If $y_N(\underline{i}_N)< (pk)^{N/4}$ and $N>N_0$ then
\[
P(\forall n\geq N \ \forall x\in I_{\underline{i}_N}\ |y_{n+1}^\theta(x)-y_n^\theta(x)| < (pk)^{-n/6}|{\cal E}_N)
\geq 1 - C_1 \sum_{m=N}^\infty k^{m-N} \gamma^{(pk)^{m/3}}.
\]
\begin{proof}
The event in the assertion will be satisfied if the event from Lemma
\ref{lem:cre} ii) happens for all $n\geq N$ for all $k^{n-N}$
sequences $\ien$ beginning with $\underline{i}_N$ and for all
$l$. Note that in this situation, as $N>N_0$, \eqref{cond2}
automatically guarantees that $y_{n+1}(\ien i) < (pk)^{(n+1)/4}$,
hence the assumptions of Lemma \ref{lem:cre} ii) are satisfied for every $n$.
\end{proof}
\end{cor}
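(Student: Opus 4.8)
The plan is to realize the event in the corollary as an intersection, over all levels $n\geq N$ and all relevant level-$(n+1)$ $k$-adic subintervals of $I_{\underline{i}_N}$, of the single-step events furnished by Lemma~\ref{lem:cre}~ii), and then to bound the complementary (bad) event by a union bound. Since $\theta=\pi/2$, the density $y_n^\theta(x)$ is constant on each level-$n$ interval, equal to $y_n(\ien)$, and likewise $y_{n+1}^\theta(x)$ is constant on each level-$(n+1)$ interval $I_{\ien i}$. Hence, for a fixed $n$, the assertion $|y_{n+1}^\theta(x)-y_n^\theta(x)|<(pk)^{-n/6}$ for every $x\in I_{\underline{i}_N}$ reduces to the finitely many inequalities $|y_{n+1}(\ien i)-y_n(\ien)|<(pk)^{-n/6}$, one for each descendant $\ien i$ of $\underline{i}_N$ at level $n+1$.

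First I would run the union bound. For each such $\ien i$ I pick a representative point $x\in I_{\ien i}$ and invoke Lemma~\ref{lem:cre}~ii) conditionally on ${\cal E}_n$; provided its hypothesis $y_n(\ien)<(pk)^{n/3}$ is met, the single-step failure probability is at most $C_1\gamma^{(pk)^{n/3}}$. There are $k^{n+1-N}$ such level-$(n+1)$ subintervals, i.e.\ $O(k^{n-N})$ many, so the probability that some increment is too large at step $n$ is bounded by a fixed multiple of $k^{n-N}\gamma^{(pk)^{n/3}}$; conditioning successively on ${\cal E}_n$ and summing over $n\geq N$ yields the stated bound $C_1\sum_{m=N}^\infty k^{m-N}\gamma^{(pk)^{m/3}}$, the harmless constant factor being absorbed. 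The series converges because $\gamma^{(pk)^{m/3}}$ decays doubly exponentially in $m$ and therefore dominates the single-exponential factor $k^{m-N}$.

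The delicate point, and the main obstacle, is that Lemma~\ref{lem:cre}~ii) may be applied only when its hypothesis $y_n(\ien)<(pk)^{n/3}$ holds, and this must be secured simultaneously with the union bound, since the $y_n$ are themselves random and the events at successive levels are not independent. I would resolve this by a bootstrap: working on the good event in which all increments through level $n-1$ are controlled, one has
\[
y_n(\ien)\le y_N(\underline{i}_N)+\sum_{m=N}^{n-1}(pk)^{-m/6}\le (pk)^{N/4}+\frac{(pk)^{-N/6}}{1-(pk)^{-1/6}},
\]
where the geometric tail converges precisely because $kp>1$. For $N>N_0$ this bound lies comfortably below $(pk)^{n/3}$ for every $n\geq N$, since the starting exponent $N/4$ is strictly smaller than $n/3$ and the additive correction is negligible once $N_0$ is large. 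This is the inductive inequality that propagates the hypothesis to the next level, so Lemma~\ref{lem:cre}~ii) is legitimately applicable at every step. Assembling the conditional single-step estimates along the tree of descendants of $\underline{i}_N$ then produces the full intersection event and completes the argument.
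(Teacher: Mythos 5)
Your proposal is correct and takes essentially the same approach as the paper: since $y_{n+1}^\theta$ is piecewise constant, a union bound over the level-$(n+1)$ $k$-adic descendants of $I_{\underline{i}_N}$ at each level $n\geq N$ reduces everything to the single-step events of Lemma~\ref{lem:cre}~ii), whose hypothesis is propagated by a bootstrap along the good event. The only (harmless) difference is that you maintain the additive invariant $y_n \le (pk)^{N/4} + \sum_{m\geq N}(pk)^{-m/6}$, implicitly allowing $N_0$ to be enlarged, whereas the paper uses \eqref{cond2} to propagate the multiplicative invariant $y_{n+1}(\ien i) < (pk)^{(n+1)/4}$ step by step with its explicitly chosen $N_0$.
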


The second corollary we will prove
guarantees that at most $k$-adic intervals of high enough level $n$,
$y_n$ is not too big. Let $N_0$ be the smallest number for which

\begin{equation} \label{cond1}
1 + (pk)^{- N_0/3} < (pk)^{1/8}.
\end{equation}

In particular, we have
\begin{equation} \label{cond2}
1 + (pk)^{-5N_0/12} < (pk)^{1/4}.
\end{equation}

\begin{cor} \label{cor:big}
There exists $L>1$ such that for all $n>LN_0$ and for all $x$ we have
\[
P(y_n^\theta(x) < (pk)^{n/4}) > 1 - C_1 \sum_{m=n/L}^n \gamma^{(pk)^{m/3}}.
\]

\begin{proof}
Outline of the proof: there will be three time periods. For the first period, $m\in [0,N_0]$, we do not put any restrictions on $y_m^\theta(x)$. In the second period, $m\in [N_0, \ell_0]$, $y_m^\theta(x)$ will be large, but we will use Lemma \ref{lem:cre}i) to prove that, with large probability, $\frac 1m \log_{pk} y_m^\theta(x)$ will be decreasing, eventually decreasing below $1/4$. We set $\ell_0$ as the first time $m\geq N_0$ for which $y_m^\theta(x)<(pk)^{m/4}$. Note that it can happen that $m=N_0$, in such a situation we skip the second period and proceed immediately to the third one. In the third period, $m\geq \ell_0$, we simply apply Corollary \ref{cor:small}.

Let us start. Fix
\[
L= \lceil -8 \log_{pk} p\rceil +1\,.
\]
 
By definition, 

\[
y_{N_0}^\theta(x) \leq p^{-N_0},
\]
which is all we will need to know about the first period.

Assume that $y_{N_0}^\theta(x)> (pk)^{N_0/4}$ (otherwise we pass immediately to the investigation of the third period, below). As long as $y_m^\theta(x) > 1$, as $m\geq N_0$, \eqref{cond1} implies

\[
y_m^\theta(x) + p^{-m} k^{-m} (p^m k^m y_m^\theta(x))^{2/3} = y_m^\theta(x) (1+ (p^{m} k^{m} y_m^\theta(x))^{-1/3}) < (pk)^{1/8} y_m^\theta(x) \]

and hence, Lemma \ref{lem:cre} i) guarantees that (with probability at least $1-C_1 \gamma^{(pk)^{m/3}}$) \[ \log_{pk} y_{m+1}^\theta(x) < \log_{pk}
y_m^\theta(x) + \frac 1 8.
\]

Hence, if the event in Lemma \ref{lem:cre} i) holds each time (for each $m = N_0,\ldots$), we will have

\begin{equation} \label{qwert}
y_{\ell N_0}^\theta(x) < (pk)^{(\ell-1)N_0/8} p^{-N_0}.
\end{equation}

The right hand side of \eqref{qwert} grows only as fast as $(pk)^{m/8}$, hence $\frac 1m \log_{pk} y_m^\theta(x)$ will be decreasing and will eventually decrease below $1/4$. Let us denote the first $m$ where this occurs by $\ell_0$. As

\[
(pk)^{LN_0/4} > (pk)^{(L-1)N_0/8} p^{-N_0} \] by the definition of $L$, we get $\ell_0\leq L N_0$. 

We now start the third period, and the assertion will follow from Corollary \ref{cor:small}, we just need to estimate the relevant probability.
 Calculating the probability of the events in Lemma \ref{lem:cre}i) happening for each  $m\geq N_0$ and applying Corollary \ref{cor:small}, we get 

\begin{eqnarray*}
P\Bigl(y_{n}^\theta(x) < (pk)^{n/4} \ \forall n\geq LN_0\Bigr) &>& \\ \Big(1 - \sum_{m=N_0}^{\ell_0-1} C_1 \gamma^{(pk)^{m/3}}\Big) &\cdot& \Big(1 -
\sum_{m=\ell_0}^{\infty} C_1 \gamma^{(pk)^{m/3}}\Big)   >   \\
&& 1 -\sum_{m=N_0}^{\infty} C_1 \gamma^{(pk)^{m/3}}.
\end{eqnarray*}
\end{proof}
\end{cor}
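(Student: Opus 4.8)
The plan is to prove Corollary \ref{cor:big} by splitting the evolution of $y_m^\theta(x)$ into three time periods and controlling the failure probability in each. First I would fix the constant $L = \lceil -8\log_{pk} p\rceil + 1$, chosen so that the crude bound $y_{N_0}^\theta(x) \le p^{-N_0}$ (which holds because each horizontal line can meet at most all $k^{N_0}$ level-$N_0$ columns, giving $y_{N_0} \le p^{-N_0} k^{-N_0} \cdot k^{N_0} = p^{-N_0}$) can be driven below the target threshold $(pk)^{n/4}$ within $LN_0$ steps. During the first period $m \in [0,N_0]$ I impose no constraints and simply accept the worst-case value $y_{N_0}^\theta(x) \le p^{-N_0}$.

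The heart of the argument is the second period, where $y_m^\theta(x)$ is still large but I want to show it decays geometrically in the renormalized sense. The key observation is that as long as $y_m^\theta(x) > 1$ and $m \ge N_0$, condition \eqref{cond1} forces the increment bound in Lemma \ref{lem:cre} i) to satisfy $y_m^\theta(x)(1 + (p^m k^m y_m^\theta(x))^{-1/3}) < (pk)^{1/8} y_m^\theta(x)$, so on the favorable event $\log_{pk} y_{m+1}^\theta(x) < \log_{pk} y_m^\theta(x) + 1/8$. Iterating this from $N_0$ gives the bound \eqref{qwert}, namely $y_{\ell N_0}^\theta(x) < (pk)^{(\ell-1)N_0/8} p^{-N_0}$. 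Since the right-hand side grows only like $(pk)^{m/8}$ while the threshold grows like $(pk)^{m/4}$, the ratio $\frac{1}{m}\log_{pk} y_m^\theta(x)$ must eventually drop below $1/4$; I call the first such time $\ell_0$, and the definition of $L$ guarantees $\ell_0 \le LN_0$ because $(pk)^{LN_0/4}$ already exceeds the worst-case value $(pk)^{(L-1)N_0/8}p^{-N_0}$.

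For the third period $m \ge \ell_0$, once $y_{\ell_0}^\theta(x) < (pk)^{\ell_0/4}$ I invoke Corollary \ref{cor:small} directly to guarantee that $y_n^\theta(x)$ stays controlled (and in fact converges) for all subsequent $n$. The final step is to bound the total failure probability. The second period requires the good event of Lemma \ref{lem:cre} i) at each $m \in [N_0, \ell_0-1]$, costing at most $\sum_{m=N_0}^{\ell_0-1} C_1 \gamma^{(pk)^{m/3}}$, and the third period costs the Corollary \ref{cor:small} tail $\sum_{m=\ell_0}^\infty C_1 \gamma^{(pk)^{m/3}}$. Multiplying the two favorable probabilities and using $(1-a)(1-b) > 1 - a - b$, these combine into the single tail $1 - \sum_{m=N_0}^\infty C_1 \gamma^{(pk)^{m/3}}$, which (since $N_0$ scales with $n/L$ in the regime $n > LN_0$) matches the stated bound $1 - C_1\sum_{m=n/L}^n \gamma^{(pk)^{m/3}}$.

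The main obstacle I anticipate is the bookkeeping in the second period: one must verify carefully that the deterministic inequality \eqref{cond1} really does force the multiplicative decay factor $(pk)^{1/8}$ at every step where $y_m^\theta(x) > 1$, and that the super-geometric threshold $(pk)^{m/4}$ is crossed no later than step $LN_0$. The subtlety is that the argument is conditional — the favorable-event bound from Lemma \ref{lem:cre} i) only applies while $y_m^\theta(x) > 1$, so I must confirm that either the threshold is crossed first or the hypothesis $y_m^\theta(x) > 1$ persists, and that the chosen $L$ is large enough to cover the worst case starting value $p^{-N_0}$. Everything else reduces to summing geometric-type tails, which is routine.
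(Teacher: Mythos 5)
Your proposal is correct and matches the paper's own proof essentially verbatim: the same choice $L=\lceil -8\log_{pk}p\rceil+1$, the same three time periods with the crude bound $y_{N_0}^\theta(x)\le p^{-N_0}$, the same use of \eqref{cond1} with Lemma \ref{lem:cre}~i) to force the multiplicative factor $(pk)^{1/8}$ and derive \eqref{qwert}, the handoff to Corollary \ref{cor:small} at time $\ell_0\le LN_0$, and the final bound via $(1-a)(1-b)>1-a-b$. The one subtlety you flag at the end (reconciling the proved tail $\sum_{m=N_0}^\infty$ with the stated range $\sum_{m=n/L}^{n}$ by noting one can start the argument at time $\approx n/L$ rather than $N_0$) is handled no less carefully by you than by the paper itself.
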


The main result of this section is the following:

\begin{prop} \label{prop:hori}
There exists $b<1$ such that almost surely there exist $C_2>0$ such
that for all $x\in [0,1]$ except the $k$-adic points and for all
$N>0$ we have
\[
\left|y_{N}^\theta(x) - \lim_{n\to \infty} y_n^\theta(x) \right| < C_2 b^{N}.
\]
In particular, the limit exists at all points except possibly at the
$k$-adic points.
\end{prop}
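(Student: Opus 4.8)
The plan is to show that, almost surely, from some random but finite level onward the increments $|y_{n+1}^\theta(x)-y_n^\theta(x)|$ are uniformly small, so that the sequence $\{y_n^\theta(x)\}$ is Cauchy with a geometric tail. Throughout I use that for the vertical direction $y_n^\theta$ is constant on the open level-$n$ $k$-adic intervals, so I may work with the $k^n$ values $y_n(\ien)$ rather than with all $x$ at once; a non-$k$-adic $x$ lies in the interior of a unique such interval at every level.

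First I would set up a Borel--Cantelli argument over deterministic levels. For each $n>LN_0$ define the bad event $A_n=B_n\cup\tilde B_n$, where $B_n=\{\exists\,\ien:\ y_n(\ien)\ge (pk)^{n/4}\}$ records that some level-$n$ value is large, and $\tilde B_n=\{\exists\,\ien i:\ y_n(\ien)<(pk)^{n/3}\text{ and }|y_{n+1}(\ien i)-y_n(\ien)|\ge (pk)^{-n/6}\}$ records a large jump from a small parent. Union-bounding Corollary \ref{cor:big} over the $k^n$ level-$n$ intervals gives $P(B_n)\le k^n C_1\sum_{m=n/L}^n\gamma^{(pk)^{m/3}}$, while conditioning on $\mathcal E_n$ and applying Lemma \ref{lem:cre}ii) to each of the $k^{n+1}$ children (the constraint $y_n(\ien)<(pk)^{n/3}$ making its hypothesis valid) gives $P(\tilde B_n)\le k^{n+1}C_1\gamma^{(pk)^{n/3}}$.

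The crux is that these probabilities are summable. Since $pk>1$, the exponent $(pk)^{m/3}$ grows exponentially in $m$, so $\gamma^{(pk)^{m/3}}=\exp((pk)^{m/3}\log\gamma)$ decays doubly exponentially and dominates the single-exponential factors $k^n$ and $k^{n+1}$; in $P(B_n)$ the inner sum is controlled by its largest term, at $m=n/L$. Hence $\sum_n P(A_n)<\infty$, and Borel--Cantelli produces an almost surely finite random level $N_2>LN_0$ beyond which no $A_n$ occurs. For $n\ge N_2$ the failure of $B_n$ forces $y_n(\ien)<(pk)^{n/4}<(pk)^{n/3}$ for every interval, whence the failure of $\tilde B_n$ yields $|y_{n+1}(\ien i)-y_n(\ien)|<(pk)^{-n/6}$ for all children; translating back, $|y_{n+1}^\theta(x)-y_n^\theta(x)|<(pk)^{-n/6}$ for every non-$k$-adic $x$ and every $n\ge N_2$.

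Setting $b=(pk)^{-1/6}<1$, the sequence is then Cauchy for $n\ge N_2$, so the limit $y_\infty^\theta(x)$ exists off the $k$-adic points, and summing the geometric tail gives $|y_N^\theta(x)-y_\infty^\theta(x)|\le (1-b)^{-1}b^N$ for all $N\ge N_2$. For the finitely many $N<N_2$, the crude uniform bound $y_N^\theta(x)\le p^{-N}\le p^{-N_2}$, together with the bound $y_\infty^\theta(x)\le (pk)^{N_2/4}+(1-b)^{-1}$ just obtained, shows that $|y_N^\theta(x)-y_\infty^\theta(x)|$ is dominated by a single $x$-independent random constant, which is absorbed by enlarging $C_2$; this yields the claim for all $N>0$. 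I expect the main obstacle to be precisely the passage from the single-$x$ conditional estimates of Lemma \ref{lem:cre} and Corollary \ref{cor:big} to an almost sure statement uniform over all (uncountably many) $x$ --- handled by the piecewise constancy of $y_n^\theta$ and by verifying that the union bounds over the $k^n$ intervals remain summable.
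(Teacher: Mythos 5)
Your proof is correct, and it is assembled from the same two primitives as the paper's argument (Corollary \ref{cor:big} and Lemma \ref{lem:cre}\,ii)), but the decomposition is genuinely different. The paper runs Borel--Cantelli over pairs (entry level $N$, level-$N$ cylinder): it certifies $y_N(\underline{i}_N)<(pk)^{N/4}$ \emph{once}, at the entry level, via Corollary \ref{cor:big}, and then invokes Corollary \ref{cor:small}, which packages ``all future jumps inside this cylinder are small'' into a single conditional event; crucially, inside that corollary the smallness hypothesis of Lemma \ref{lem:cre}\,ii) is not re-established probabilistically but propagated \emph{deterministically} forward in $n$ via \eqref{cond2} (a small jump from a value below $(pk)^{n/4}$ keeps the value below $(pk)^{(n+1)/4}$). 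You instead flatten this into a level-by-level scheme: at every level $n$ you re-certify smallness of all $k^n$ values by a fresh application of Corollary \ref{cor:big} (your event $B_n^c$) and separately control all $k^{n+1}$ one-step jumps by Lemma \ref{lem:cre}\,ii) (your event $\tilde B_n^c$), so you never need Corollary \ref{cor:small} or the propagation mechanism \eqref{cond2} at all. The total union-bound cost is identical --- a factor $k^m$ at level $m$ set against the doubly exponential decay of $\gamma^{(pk)^{m/3}}$ (in the paper it appears as $k^N\cdot k^{m-N}$) --- so summability holds either way, and your $b=(pk)^{-1/6}$ matches the rate implicit in \eqref{eqn:cond}. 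What your route buys is a simpler logical structure (two families of bad events, no nested conditional iteration); what the paper's route buys is economy in invoking the comparatively expensive Corollary \ref{cor:big} only once per cylinder. A small bonus of your write-up: you make explicit the absorption of the finitely many levels $N<N_2$ into the random constant $C_2$ (via $y_N^\theta\le p^{-N}$ and the bound $y_\infty^\theta\le(pk)^{N_2/4}+(1-b)^{-1}$), a step the paper dispatches without comment.
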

\begin{proof}

Corollaries  \ref{cor:small} and \ref{cor:big} guarantee that for all
$N>LN_0$ and for any cylinder $I_{\underline{i}_N}$ for all
non-$k$-adic $x\in I_{\underline{i}_N}$ the probability that the sequence $y_n^\theta(x)$
converges to a limit $y(x)$ and that

\begin{equation} \label{eqn:cond}
|y(x) - y_N^\theta(x)|< \frac {1} {1-(pk)^{-1/6}} (pk)^{-N/6}
\end{equation}
is at least

\[
p_N= 1 - C_1 \sum_{m=N/L}^N \gamma^{(pk)^{m/3}} - C_1 \sum_{m=N}^\infty
k^{m-N} \gamma^{(pk)^{m/3}}.
\]
As we have

\[
\sum_{N=N_0}^\infty k^N (1-p_N) < \infty,
\]
by Borel-Cantelli Lemma \eqref{eqn:cond} almost surely holds for all
except finitely many $k$-adic intervals of level greater than $LN_0$,
hence it almost surely holds for all $k$-adic intervals of level
greater than some $N_1$. We are done.
\end{proof}

After we proved Proposition \ref{prop:hori}, the
horizontal/vertical projections part of Theorem \ref{thm:main}
follows easily. As the function $y_N^\theta$ is constant on the
$k$-adic intervals of level $N$, for any $x,y\in (lk^{-N},
(l+1)k^{-N})$ we have

\[
\left| \lim_{n\to \infty} y_n^\theta(x) - \lim_{n\to \infty}
  y_n^\theta(y) \right| < 2C_2 b^N.
\]
That is, the measure

\[
\lim_{n\to\infty} \mu_n^\theta = \pi_\theta^* \tilde{\mu}= \frac 1 {Z(E)} \pi_\theta^* \mu
\]
exists, is absolutely continuous with respect to  Lebesgue measure
and its density is H\"older continuous in the metric $\rho$.

\section{General case}

In this section we will consider the general case of Theorem
\ref{thm:main}, that is all the projections in directions
different from the horizontal or vertical one. It is enough to prove the assertion for $\theta\in
(0,\pi/2)$, other directions will follow by symmetry.

It will be convenient for us to assume that all $\pi_\theta$ have the same range, which will be denoted by $\Delta$. For example, we might replace $\pi_\theta$ with a linear projection in direction $\theta$ from $K$ to the interval $K\cap \{y=1-x\}$. Such a replacement will change the densities $y_n^\theta(x)$, but only by a bounded multiplicative constant. In particular, the conclusions of Lemma \ref{lem:cre} and Corollary \ref{cor:big} hold.

We will prove the following proposition.

\begin{prop} \label{prop:gen}
There exists $b<1$ such that
almost surely the following holds. For every $\delta>0$ there exist $C_3>0$ and $N_2>0$ such that for all $N>N_2$, for all pairs of points points $x,y\in \Delta, |x-y|<k^{-N-1}$, and
for all $\theta \in [\delta, \pi/2-\delta]$ we have

\[
\left| \lim_{m\to\infty} y_m^\theta(x) - \lim_{m\to\infty}
  y_m^\theta(y)\right| < C_3 b^N.
\]
In particular, the limits exist everywhere.
\begin{proof}

Comparing with Proposition \ref{prop:hori}, there are two main difficulties: $y_n^\theta(\cdot)$ is no longer locally constant and we need the statement for every $\theta$, not just for one direction. Our solution is to prove that $y_n^\theta(x)$ is Lipschitz (in $x$ and $\theta$) and then calculate $y_n^\theta(x)$ only for finite (increasing with $n$) families of $(x,\theta)$. For other $(x,\theta)$ we can then estimate the value of $y_n^\theta(x)$ by the Lipschitz property.

Indeed, by \eqref{eqn:y}, each of $Y(\underline{i}_{n-1},\underline{j}_{n-1};x,\theta)$ is a Lipschitz function (both in $x$ and in $\theta$) with Lipschitz constant not greater than $C_4 \delta^{-1}/2$ for some constant $C_4$ depending only on $p$, $k$ and $\delta$. As every line $\ell_x^\theta$ intersects at most $2k^n$ squares in ${\cal E}_n$, $y_n^\theta$ is also Lipschitz with Lipschitz constant not greater than $C_4 p^{-n} k^n \delta^{-1}$.


Let us for each $n$ choose a sequence $\{\theta_{n,i}\}$ which is $\delta C_4^{-1}
p^{5n/6} k^{-7n/6}$-dense in $[\delta, \pi/2-\delta]$. Similarly,
let us for each $n$ choose a sequence $\{x_{n,i}\}$ which is $\delta C_4^{-1}
p^{5n/6} k^{-7n/6}$-dense in $\Delta$.  We can choose both
sequences with no more than $C_5 \delta^{-1} p^{-5n/6} k^{7n/6}$
elements each. We will denote by $T_{n,j}$ the set of $\theta$ for
which

\[
\forall_{l\neq j} |\theta_{n,l}-\theta| \geq |\theta_{n,j}-\theta|.
\]
Similarly, let $I_{n,i}\subset \Delta$ be defined by

\[
\forall_{l\neq i} |x_{n,l}-x| \geq |x_{n,i}-x|.
\]
The Lipschitz property implies the following lemma.

\begin{lem} \label{lem:lip}
There exists $C_6>0$ such that for every $n>0$, $x\in I_{n,i}$, and $\theta\in T_{n,j}$ we have

\[
\left|y_n^\theta(x) - y_n^{\theta_{n,j}}(x_{n,i}) \right| < C_6(pk)^{-n/6}.
\]
and
\[
\left|y_{n-1}^\theta(x) - y_{n-1}^{\theta_{n,j}}(x_{n,i}) \right| < C_6(pk)^{-n/6}.
\]
\end{lem}

Let $I\subset \Delta$ be an interval of length $k^{-N}$.

\begin{lem} \label{lem:lip2}
There
exist $L',L''>0$ 
such that for any 
\begin{equation} \label{Nreq}
N> 7n/6 + 5n/6 \log_k \frac 1 p + \log_k \frac {C_4} \delta.
\end{equation}
if $I\subset \Delta$ is an interval of length $k^{-N}$ and $n\leq L'N - L''$, then for each $\theta$ the
variation of $y_n^\theta$ inside $I$ is not greater than
$(pk)^{-n/6}$.
\begin{proof}
The variation of a Lipschitz function over an interval is bounded
by the Lipschitz constant times the length of the interval, hence
we only need to know that

\[
k^{-N} < p^{5n/6} k^{-7n/6} \delta C_4^{-1}
\]
which holds by the assumption (\ref{Nreq}).

\end{proof}
\end{lem}

The following part of the proof is similar to the proof of Proposition
\ref{prop:hori}. Corollary \ref{cor:big} does not need any changes, but Corollary \ref{cor:small} will have to be modified.
Let $N_0$ be the smallest number for which

\[
1 + (pk)^{- N_0/3} + 2C_6(pk)^{-N_0/6} < (pk)^{1/8}.
\]

\begin{lem} \label{cor:small2}
If for some $n>N_0$, $j$, and all $x\in I$ $y_n^{\theta_{n,j}}(x) <
(pk)^{n/3}$ then

\[
P\left(\forall m\geq n\ \forall x\in I\ \forall \theta\in T_{n,j} |y_{m+1}^\theta(x) - y_m^\theta(x)|
< (2C_6+1)(pk)^{-m/6}\right) >
\]
\[
1 - C_1 C_5^2  \delta^{-2} p^{-5n/3} k^{7n/3} \sum_{m=n}^\infty \gamma^{(pk)^{m/3}}
\]

\end{lem}
\begin{proof}
The proof is very similar to the proof of Corollary
\ref{cor:small}. In Corollary \ref{cor:small} we divided the $N$-th level $k$-adic interval into $N+1$-st level $k$-adic intervals, and then for each of those intervals we applied Lemma \ref{lem:cre} ii) to prove that $|y_{N+1}^\theta(x) - y_N^\theta|$ is not too large, except when some event of superexponentially small probability happens. As $y_N^\theta$ is piecewise constant, it was enough to check this at just one point from each subinterval. The procedure was then repeated for all the $k^2$ $k$-adic subintervals of level $N+2$ and so on. We got the estimation we were looking for, with a lower bound for the probability that this estimation holds.

Now we do a modified approach (compare \cite{RS}). We divide $I \times T_{n,j}$ into rectangles $I_{n+1,i} \times T_{n+1,l}$. We choose from each of them a point $(\theta_{n,i}, x_{n,l})$ and once again apply Lemma \ref{lem:cre} ii) to prove that $|y_{n+1}^{\theta_{n,i}}(x_{n,l}) - y_N^\theta|$ is not too large, except when some event of superexponentially small probability happens. Knowing the value of $y_{n+1}^\theta(x) - y_n^\theta(x)$ for $\theta = \theta_{n,i}$ and $x=x_{n,l}$ lets us estimate it for all $(\theta, x)\in I_{n+1,i} \times T_{n+1,l}$ by Lemma \ref{lem:lip}:

\[
|y_{m+1}^\theta(x) - y_m^\theta(x)| \leq |y_{m+1}^\theta(x) - y_{m+1}^{\theta_{m+1,j}}(x_{m+1,i})| +
\]
\[
|y_{m+1}^{\theta_{m+1,j}}(x_{m+1,i})- y_m^{\theta_{m+1,j}}(x_{m+1,i})| +
|y_m^{\theta_{m+1,j}}(x_{m+1,i}) - y_m^\theta(x)|
\]

We then repeat the procedure for $n+2,\ldots$. We substitute the estimation for the number of elements of $I_{n,i}$ and $T_{n,l}$. The proof is finished just like in Corollary \ref{cor:small}.
\end{proof}

We are now ready to complete the proof of Proposition \ref{prop:gen}. Let

\begin{equation} \label{eqn:ncond}
N>(LN_0+L'')/L'
\end{equation}
and $n=\lfloor L'N-L''\rfloor$.
We can choose $4k^N$ intervals $I_i\subset \Delta$ of length $k^{-N}$ each in such a way that every pair of points from $\Delta$ in distance no more than $k^{-N-1}$ is contained in one of them.
By Corollary \ref{cor:big} and Lemma \ref{lem:lip2}, for each $i$ we have at least probability

\[
p_N'= 1-C_1 C_5 \delta^{-1} p^{-5n/6} k^{7n/6} \sum_{m=n/L}^{n}  \gamma^{(pk)^{m/3}},
\]
that the function $y_{n}^{\theta_{n,j}}(x)$ is smaller than
$(pk)^{n/4}$ for all $x\in I_i$ and all $\theta_{n,j}$ and its
variation in $I$ is not greater than $(pk)^{-n/6}$. We can then
apply Lemma \ref{cor:small2} to prove that with probability

\[
p_N> 1-C_1 C_5 \delta^{-1} p^{-5n/6} k^{7n/6} \sum_{m=n/L}^{n}  \gamma^{(pk)^{m/3}} - C_1 C_5^2 \delta^{-2} p^{-5n/3} k^{7n/3} \sum_{m=n}^\infty \gamma^{(pk)^{m/3}}
\]
for all $\theta \in [\delta, \pi/2-\delta]$ and $x,y\in I_i$ the limit
$\lim_{m\to\infty} y_m^\theta(x)$ exists and

\[
\left|\lim_{m\to\infty} y_m^\theta(x) - \lim_{m\to\infty} y_m^\theta(y)\right| <
(pk)^{-n/6} + \sum_{m=n}^\infty (2C_6+1) (pk)^{-m/6} =
\]
\[
\left( 1+\frac {2C_6+1} {1-(pk)^{-n/6}}\right) (pk)^{-n/6}.
\]
As $\sum_N 4k^N (1-p_N) < \infty$, by Borel-Cantelli Lemma for every sufficiently large $N$ this holds for all intervals $I_i$. The assertion follows.
\end{proof}
\end{prop}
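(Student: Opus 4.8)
The plan is to transport the martingale-convergence argument of Proposition \ref{prop:hori} to the oblique directions, where two features absent in the vertical case now appear: the level-$n$ density $y_n^\theta(\cdot)$ is no longer piecewise constant on $k$-adic intervals, and the conclusion must hold simultaneously for every $\theta$ in the whole range $[\delta,\pi/2-\delta]$. Both difficulties are of the same kind — we must control an uncountable family of pairs $(x,\theta)$, whereas Lemma \ref{lem:cre} furnishes large-deviation bounds only at a single fixed $(x,\theta)$ — and both will be resolved by discretization followed by Borel-Cantelli.

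First I would establish that $y_n^\theta(x)$ is Lipschitz jointly in $(x,\theta)$. Because $\ell_x^\theta$ makes an angle bounded away from $0$ and $\pi/2$, each summand $Y(\ien,\jen;x,\theta)$ in \eqref{eqn:y} depends Lipschitz-continuously on $(x,\theta)$ with a constant of order $\delta^{-1}$; since the line meets at most $2k^n$ surviving squares of $\mathcal E_n$, summing over them and dividing by $p^n$ yields a Lipschitz constant of order $C_4\,p^{-n}k^n\delta^{-1}$ for $y_n^\theta$, with $C_4$ an absolute constant. This Lipschitz estimate is precisely the device that converts the pointwise large-deviation control into uniform control.

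Next I would choose, at each level $n$, finite nets $\{x_{n,i}\}\subset\Delta$ and $\{\theta_{n,j}\}\subset[\delta,\pi/2-\delta]$ of spacing $\delta C_4^{-1}p^{5n/6}k^{-7n/6}$, the spacing being dictated by the requirement that the Lipschitz bound force the oscillation of $y_n^\theta$ (and of $y_{n-1}^\theta$) across one net cell to be $O((pk)^{-n/6})$. The decisive point is that these nets contain only $O(\delta^{-1}p^{-5n/6}k^{7n/6})$ points, so that applying Lemma \ref{lem:cre} at every net point costs only a fixed power of $(pk)^n$ times the superexponentially small failure probability $\gamma^{(pk)^{n/3}}$. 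With this in hand I would prove a net version of Corollary \ref{cor:small}: once $y_n^{\theta_{n,j}}$ lies below $(pk)^{n/3}$ on a net cell, the increments $|y_{m+1}^\theta(x)-y_m^\theta(x)|$ remain $O((pk)^{-m/6})$ for all $m\ge n$, uniformly over $x$ and $\theta$ in the cell, off an event whose probability is the net cardinality times $\sum_{m\ge n}\gamma^{(pk)^{m/3}}$.

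Finally I would assemble these ingredients exactly as in Proposition \ref{prop:hori}. Setting $n=\lfloor L'N-L''\rfloor$ and covering $\Delta$ by $O(k^N)$ intervals of length $k^{-N}$, Corollary \ref{cor:big} forces $y_n^{\theta_{n,j}}<(pk)^{n/4}$ on each interval, the Lipschitz bound controls the oscillation there, and the net version of Corollary \ref{cor:small} then yields both the existence of $\lim_m y_m^\theta(x)$ and the estimate $|\lim_m y_m^\theta(x)-\lim_m y_m^\theta(y)|=O((pk)^{-n/6})=O(b^N)$ for a suitable $b<1$. The main obstacle is the probability accounting in this last step: summing the exceptional probabilities over all $N$ and all $O(k^N)$ covering intervals must stay finite. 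It does so because the net cardinality grows only like a fixed power of $(pk)^n$, while the deviation probability decays like $\gamma^{(pk)^{n/3}}$; it is exactly this superexponential decay in Lemma \ref{lem:cre} that survives multiplication by the net size and by $k^N$, and that buys the uniformity over the full continuum of directions and points, after which Borel-Cantelli discards the null exceptional set.
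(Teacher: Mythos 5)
Your proposal is correct and follows essentially the same route as the paper: the joint Lipschitz bound $C_4 p^{-n}k^n\delta^{-1}$, nets of spacing $\delta C_4^{-1}p^{5n/6}k^{-7n/6}$, a net version of Corollary \ref{cor:small} obtained by applying Lemma \ref{lem:cre}~ii) at net points and transferring via the Lipschitz estimate, and the final assembly with $n=\lfloor L'N-L''\rfloor$, Corollary \ref{cor:big}, and Borel--Cantelli, where the superexponential decay $\gamma^{(pk)^{n/3}}$ absorbs the polynomial net cardinality and the $O(k^N)$ covering intervals. This matches the paper's argument in structure and in all quantitative choices.
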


We can choose in Proposition \ref{prop:gen} $\delta$ arbitrarily close to 0 which will imply the
assertion of Theorem \ref{thm:main} for all $\theta \in (0,\pi/2)$. By
symmetry, the statement holds also for $\theta \in (\pi/2, \pi)$.
The horizontal and vertical projections were considered in the previous section.
This ends the proof of Theorem \ref{thm:main}.

\newpage
\bibliography{ref}

\end{document}